\newcommand{\GF}{{\mathbb F}}
\newcommand{\Aut}{{\rm Aut}}
\newtheorem{Thm}{Theorem}[section]
\newtheorem{Lem}[Thm]{Lemma}
\theoremstyle{definition}
\newtheorem{Def}[Thm]{Definition}
\begin{document}
\title{An alternative construction of the $G_2(2)$-graph}
\author{Koichi Inoue \thanks{E-mail: k-inoue@teac.paz.ac.jp} \\ \footnotesize{Gunma Paz University, 
Gunma-ken 370-0006, Japan.}} 
\date{}
%\date{{\tiny{Last modified: 10.30.2024}}}
\maketitle

\begin{flushright}
\tiny{Last modified: 2026/01/04}
\end{flushright}

\paragraph{Abstract.} In this note, we give an alternative construction of the $G_2(2)$-graph from a $U_3(2)$-geometry. 

\paragraph{Keywords:} generalized hexagon; unitary geometry

\setcounter{section}{+0}
\section{Introduction}
The paper~\cite{inoue} has given an alternative approach to construct the Chevalley group $G_2(2)$ with a certain 2-transitive permutation representation of degree 28 as well as a concrete tool to study the structure of it. In this note, using the quite elementary description, we shall give an alternative construction of the $G_2(2)$-graph, which yields an alternative approach of $G_2(2)$ with a permutation representation of degree 36.
%\begin{Rem}
%The incidence structure whose points are points, lines, flags and antiflags of the Fano plane \bm{$F$}, having lines of  the following two types:
%\begin{itemize}
%\item $\{p,L,(p,L)\}$ for each flag $(p,L)$ of \bm{$F$};
%\item $\{(p,L),(q,M),(r,N)\}$ for each flag $(p,L)$ of \bm{$F$}, where $L:=\{p,q,r\}$ and $M,N$ are the other lines containing $p$.
%\end{itemize}
%Then this structure is isomorphic to \bm{$\text{H}(2)$}.
%\end{Rem}
%In this note, we shall give an alternative construction of \bm{$\text{H}(2)$} from a three-dimensional unitary geometry over the field $\GF_4$. 

\section{Preliminaries}
%We assume familiarity with basics of generalized hexagons and the three-dimensional unitary geometry over a finite field, and refer to van Maldeghem~\cite{maldeghem} and Taylor~\cite{Taylor}.
%\vspace{1.0\baselineskip}
%\noindent
For definition, notation and known facts of strongly regular graphs and two-graphs, we refer to  Cameron and van Lint~\cite[Chapters 2 and 4]{cameron-lint}
\vspace{1.0\baselineskip}

\noindent
A \textit{graph} $\Gamma$ consists of a finite set of \textit{vertices} together with a set of \textit{edges}, where an edge is a $2$-subset of the vertex-set. The set of all vertices adjacent to a vertex $x$ is denoted by $\Gamma(x)$ and called \textit{neighbours} of $x.$ If $\Gamma(x)$ always has the constant $k$ for each vertex $x$, then $\Gamma$ is said to be $k$-\textit{regular}. The \textit{complement} $\overline{\Gamma}$ of $\Gamma$ has the same vertices as $\Gamma$, but two vertices are adjacent in $\overline{\Gamma}$ if and only if they are not adjacent in $\Gamma$. Let $\{x_1,x_2,\dots,x_n\}$ be the vertex-set of $\Gamma$. The \textit{adjacency matrix} $A$ of $\Gamma$ is the $n\times n$ matrix with $(i,j)$-entry 1 if $x_i$ and $x_j$ are adjacent, 0 otherwise.

For two graphs $\Gamma_1$ and $\Gamma_2,$ we define an $\textit{isomorphism}$ $\sigma$ 
from $\Gamma_1$ onto $\Gamma_2$ to be a one-to-one mapping from the vertices of $\Gamma_1$ onto the vertices of $\Gamma_2$ and the edges of $\Gamma_1$ onto the edges of $\Gamma_2$ such that $x$ is in $e$ if and only if $x^{\sigma}$ is in $e^{\sigma}$ for each vertex $x$ and each edge $e$ of $\Gamma_1$, and then say that $\Gamma_1$ and $\Gamma_2$ are $\textit{isomorphic}$. An \textit{automorphism} of $\Gamma$ is an isomorphism from $\Gamma$ to itself. The set of all automorphisms of $\Gamma$ forms a group, which is called the \textit{automorphism group} of $\Gamma$ and denoted by $\Aut\Gamma$.
 
The graph $\Gamma$ is called a \textit{strongly regular} graph 
with parameters $(n,k,\lambda,\mu)$ if $\Gamma$ is $k$-regular on $n$ vertices, not complete or null, and satisfy that any two adjacent vertices have $\lambda$ common neighbours 
and any two non-adjacent vertices have $\mu$ common neighbours.  

\begin{Lem}\label{lem:2.1}
The complement of a strongly regular graph with parameters $(n,k,\lambda,\mu)$ is also strongly regular graph with parameters $(n,n-k-1,n-2k+\mu-2,n-2k+\lambda)$. 
\end{Lem}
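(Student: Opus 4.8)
The plan is to verify the four defining conditions for $\overline{\Gamma}$ directly from the definition of a strongly regular graph, using only elementary double counting. First I would note that $\overline{\Gamma}$ has the same vertex set of size $n$, and that since $\Gamma$ is neither complete nor null, $\overline{\Gamma}$ is neither null nor complete. For regularity, a fixed vertex $x$ is $\Gamma$-adjacent to exactly $k$ of the other $n-1$ vertices, hence $\overline{\Gamma}$-adjacent to exactly $n-1-k$ of them; so $\overline{\Gamma}$ is $(n-k-1)$-regular.

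For the $\lambda$-parameter of $\overline{\Gamma}$, take $x,y$ adjacent in $\overline{\Gamma}$, i.e.\ non-adjacent in $\Gamma$. Among the $n-2$ vertices distinct from $x$ and $y$, exactly $k$ lie in $\Gamma(x)$ (all $k$ of them, since $y\notin\Gamma(x)$), exactly $k$ lie in $\Gamma(y)$, and exactly $\mu$ lie in $\Gamma(x)\cap\Gamma(y)$ because $x,y$ are non-adjacent in $\Gamma$. By inclusion--exclusion, $2k-\mu$ of those $n-2$ vertices are $\Gamma$-adjacent to at least one of $x,y$, so $(n-2)-(2k-\mu)=n-2k+\mu-2$ of them are $\Gamma$-adjacent to neither; these are precisely the common $\overline{\Gamma}$-neighbours of $x$ and $y$.

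For the $\mu$-parameter, take $x,y$ non-adjacent in $\overline{\Gamma}$, hence adjacent in $\Gamma$. Here $y\in\Gamma(x)$, so among the $n-2$ vertices other than $x,y$ only $k-1$ lie in $\Gamma(x)$ and only $k-1$ lie in $\Gamma(y)$, while $\lambda$ lie in both by the adjacency of $x,y$ in $\Gamma$. Inclusion--exclusion now gives $2(k-1)-\lambda$ vertices $\Gamma$-adjacent to at least one of $x,y$, hence $(n-2)-(2k-2-\lambda)=n-2k+\lambda$ common $\overline{\Gamma}$-neighbours. Together with the count above, this establishes all four parameters.

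The computation is entirely routine; the only point requiring care is the bookkeeping distinction between the two cases --- whether or not $y$ is counted among the $\Gamma$-neighbours of $x$ --- which is exactly what produces the asymmetry between the formulas $n-2k+\mu-2$ and $n-2k+\lambda$. (Alternatively, one could argue with adjacency matrices by writing $\overline{A}=J-I-A$ and substituting into the identity $A^2=kI+\lambda A+\mu(J-I-A)$, obtaining the stated parameters after simplification; but the combinatorial argument above is shorter and self-contained.)
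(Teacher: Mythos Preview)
Your proof is correct: the inclusion--exclusion bookkeeping is exactly right, including the careful distinction between the two cases that accounts for the $-2$ in the $\lambda$-formula versus its absence in the $\mu$-formula. The alternative adjacency-matrix route you sketch at the end would also go through.

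As for comparison, the paper does not actually supply a proof of this lemma; it is stated as a standard fact in the preliminaries, with the reader referred to Cameron and van Lint~\cite{cameron-lint} for background on strongly regular graphs. So there is nothing in the paper to compare against --- your argument simply fills in what the authors omit.
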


Let $\Gamma$ be a strongly regular graph with parameters $(n,k,\lambda,\mu)$, and $\{x_1,x_2,\dots,x_n\}$ the vertex-set. If $A$ is the adjacency matrix of $\Gamma$, then the $(i,j)$-entry of $A^2$ is the number of vertices adjacent to $x_i$ and $x_j$. This number is $k, \lambda$ or $\mu$ according as $x_i$ and $x_j$ are equal, adjacent or non-adjacent. Thus
\[ A^2=kI+\lambda A+\mu(J-I-A), \tag{$\ast$}\] 
where $I$ is the identity matrix and $J$ is the all-1 matrix. Also, since $\Gamma$ is regular, 
\[ AJ=JA=kJ. \tag{$\ast\ast$}\]
Conversely, a strongly regular graph can be defined as a graph (not complete or null) of which the adjacency matrix satisfies $(\ast)$ and $(\ast\ast)$.

Instead of the ordinary adjacency matrix for a graph with vertex-set $V:=\{x_1,x_2,\dots,x_n\}$, J.~J.~Seidel considered the \textit{Seidel matrix} $S$ whose $(i,j)$-entry is 
\[s_{ij}=\begin{cases}
0 & \text{if $i=j$}, \\
-1 & \text{if $\{x_i,x_j\}$ is an edge}, \\
+1 & \text{if $\{x_i,x_j\}$ is a non-edge}.
\end{cases}\]
The operation of \textit{switching} a graph $\Gamma$ with respect to a subset of $V$ replaces $\Gamma$ by the graph $\Gamma^{'}$ such that all edges between $Y$ and its complement $\overline{Y}$ with non-edges and \textit{vice versa}, leaving edges within $Y$ and $\overline{Y}$ unaltered. Note that switching w.r.t $Y$ and $\overline{Y}$ are the same operation. Furthermore, switching successively w.r.t. $Y_1$ and $Y_2$ is the same as switching w.r.t. the symmetric difference $Y_1 \bigtriangleup Y_2$. Thus the set of all graphs on $V$ falls into equivalence classes (called \textit{switching classes}). Two graphs in the same switching class are called \textit{switching equivalent}.

\begin{Lem}\label{lem:2.2}
Given a graph $\Gamma$, if $\Gamma^{'}$ is the graph obtained by switching $\Gamma$ w.r.t. $Y$, then $\Gamma^{'}$ has the Seidel matrix $DSD$, where $S$ is the Seidel matrix of $\Gamma$ and $D$ is the diagonal matrix whose $(i,i)$-entry is
\[d_{ii}=\begin{cases}
-1 & \text{if $x_i \in Y$}, \\
+1 & \text{if $x_i \notin Y$}.
\end{cases}\]
\end{Lem}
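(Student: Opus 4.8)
The plan is to compute the $(i,j)$-entry of $DSD$ directly and match it, entry by entry, with the definition of the Seidel matrix $S'=(s'_{ij})$ of $\Gamma'$. Since $D$ is diagonal, one has $(DSD)_{ij}=d_{ii}\,s_{ij}\,d_{jj}$, so the whole argument reduces to keeping track of the scalar $d_{ii}d_{jj}\in\{+1,-1\}$ and of how the switching operation changes the adjacency of the pair $\{x_i,x_j\}$.

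First I would handle the diagonal. For $i=j$ we get $(DSD)_{ii}=d_{ii}^2\,s_{ii}=0$, which agrees with $s'_{ii}=0$; and switching never creates loops, so $\Gamma'$ is again a graph and its Seidel matrix indeed has zero diagonal. Next I would split the off-diagonal entries ($i\ne j$) into two cases according to the value of $d_{ii}d_{jj}$. If $x_i$ and $x_j$ lie both in $Y$ or both in $\overline Y$, then $d_{ii}d_{jj}=+1$, so $(DSD)_{ij}=s_{ij}$; and since switching w.r.t.\ $Y$ leaves all edges and non-edges within $Y$ and within $\overline Y$ unaltered, the pair $\{x_i,x_j\}$ has the same status in $\Gamma'$ as in $\Gamma$, whence $s'_{ij}=s_{ij}=(DSD)_{ij}$. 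If instead exactly one of $x_i,x_j$ lies in $Y$, then $d_{ii}d_{jj}=-1$, so $(DSD)_{ij}=-s_{ij}$; and switching toggles precisely the pairs crossing between $Y$ and $\overline Y$, turning an edge into a non-edge and vice versa, which by the sign convention ($-1$ for an edge, $+1$ for a non-edge) is exactly the operation $s_{ij}\mapsto -s_{ij}$. Hence again $s'_{ij}=-s_{ij}=(DSD)_{ij}$.

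Combining the three cases gives $DSD=S'$, as claimed. I do not expect any genuine obstacle here: the only thing to be careful about is the bookkeeping of sign conventions, i.e.\ observing that conjugation by $D$ negates exactly those entries indexed by one vertex inside $Y$ and one vertex outside $Y$, which is precisely the set of pairs whose adjacency status the switching toggles. (If desired, the same identity can be restated in terms of adjacency matrices using $S=J-I-2A$, but the direct entrywise check above is the cleanest route.)
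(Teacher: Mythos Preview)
Your entrywise check is correct and is the standard argument: conjugation by $D$ negates exactly the entries $s_{ij}$ with one index in $Y$ and one in $\overline{Y}$, which is precisely the set of pairs whose adjacency status is toggled by switching. Note, however, that the paper does not supply its own proof of this lemma; it is stated without proof in the preliminaries as a known fact (with reference to Cameron and van Lint), so there is nothing to compare your argument against.
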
  

A \textit{two-graph} consists of finite set $V$ and a set $\mathcal{B}$ of 3-subsets of $V$ such that, for any 4-subset $X$ of $V$, an even number of members of $\mathcal{B}$ belong to $X$. Moreover, A two-graph is said to be \textit{regular} if it is a 2-design (with parameters 2-($v$,3,$\lambda$) for some $\lambda$ ). Given any graph $\Gamma$ with the vertex-set $V$, let $\mathcal{B}$ be the set of 3-subsets of $V$ which contain an odd number of edges of $\Gamma$. Then the incident structure ($V,\mathcal{B}$) is a two-graph, and we call it the \textit{two-graph associated} with $\Gamma$. 

\begin{Lem}\label{lem:2.3}
Graphs $\Gamma_1$ and $\Gamma_2$ on the vertex-set $V$ are associated to the same two-graph if and only if $\Gamma_1$ and $\Gamma_2$ are switching equivalent.
\end{Lem}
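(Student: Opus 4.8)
The plan is to prove the two implications separately, using the ``only if'' direction as a stepping stone for the ``if'' direction.

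For the ``only if'' direction, it suffices to treat a single switching, say with respect to a set $Y\subseteq V$, since switching equivalence is generated by such moves and ``being associated to the same two-graph'' is an equivalence relation. I would fix an arbitrary $3$-subset $T=\{x,y,z\}$ and count how many of its three pairs have their edge/non-edge status toggled by the switching. A pair is toggled precisely when exactly one of its two endpoints lies in $Y$; splitting into cases according to $|T\cap Y|\in\{0,1,2,3\}$, the number of toggled pairs is $0$, $2$, $2$, $0$ respectively — always even. Hence the parity of the number of edges of $\Gamma$ inside $T$ is unchanged, so $T$ lies in the new associated two-graph if and only if it lay in the old one. This settles the ``only if'' part.

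For the ``if'' direction, suppose $\Gamma_1$ and $\Gamma_2$ have the same associated two-graph $(V,\mathcal B)$. Fix a vertex $v\in V$ and let $Y$ be the set of vertices $w\neq v$ for which $\{v,w\}$ is an edge of exactly one of $\Gamma_1,\Gamma_2$; note that $v\notin Y$. Let $\Gamma_1'$ be obtained from $\Gamma_1$ by switching with respect to $Y$. For each $w\neq v$ the pair $\{v,w\}$ is toggled exactly when $w\in Y$, so by construction $\Gamma_1'$ and $\Gamma_2$ agree on every pair containing $v$. By the ``only if'' direction already proved, $\Gamma_1'$ is associated to the same two-graph $(V,\mathcal B)$ as $\Gamma_1$, hence as $\Gamma_2$.

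It then remains to promote ``agree on all pairs through $v$'' to $\Gamma_1'=\Gamma_2$. For any pair $\{w_1,w_2\}$ with $w_1,w_2\neq v$, apply the two-graph condition to the triple $T=\{v,w_1,w_2\}$: since $\Gamma_1'$ and $\Gamma_2$ have the same two-graph, the numbers of edges of $\Gamma_1'$ and of $\Gamma_2$ inside $T$ have the same parity, and the pairs $\{v,w_1\}$, $\{v,w_2\}$ contribute equally to both; hence $\{w_1,w_2\}$ is an edge of $\Gamma_1'$ if and only if it is an edge of $\Gamma_2$. Thus $\Gamma_1'=\Gamma_2$, and since $\Gamma_1'$ is switching equivalent to $\Gamma_1$, so is $\Gamma_2$. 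The main obstacle is this backward direction — in particular choosing $Y$ correctly and making sure it excludes the base vertex $v$ so that switching does exactly what is wanted on the star at $v$; the case analysis in the forward direction is routine.
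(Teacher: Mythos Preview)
The paper does not give its own proof of this lemma; it is stated in the preliminaries as a known fact, with the reader referred to Cameron and van Lint~\cite{cameron-lint}. Your argument is correct and is essentially the standard one found there: the forward implication is the parity count showing that switching toggles an even number of pairs inside any $3$-set, and the backward implication is the ``switch so that the two graphs agree on all pairs through a fixed base vertex, then compare triples through that vertex'' device. One cosmetic slip: relative to the lemma's phrasing (``same two-graph \emph{if and only if} switching equivalent''), you have interchanged the labels ``if'' and ``only if''; the mathematics is unaffected.
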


\section{The $G_2(2)$-graph}
The Chevalley group $G_2(2)$ has a permutation representation of degree 36 and rank 3. Therefore there is the so-called $rank$ 3 $graph$ which is a strongly regular graph with parameters $(36,14,4,6)$ (e.g., see 
\cite[p.36]{cameron-lint}), and we call it $G_2(2)$-\textit{graph}. Since $G_2(2)$ is a maximal subgroup of the symplectic group $S_6(2)$ (see ATLAS~\cite{atlas}), the graph is geometrically constructed from 36 sub-GH(1,2)'s (i.e., generalized subhexagons of order (1,2)) of the so-called the \textit{split Cayley hexagon} of $order$ (2,2) in an $S_6(2)$-geometry (i.e., a 6-dimensional vector space over the field $\GF_2$ with a non-degenerate alternating form). 
Also, the isomorphism $G_2(2) \simeq U_3(3)$:2 translates into an alternative description of the graph in terms of a $U_3(3)$-geometry (i.e., a 3-dimensional vector space over the field $\GF_9$ with a non-degenerate hermitian form). For more detail on the constructions, see Brouwer and van Maldeghem~\cite[Section 10.8]{bm}, 

\section{The construction}
%The purpose of this section is to show that \bm{$S$} is isomorphic to $G_2(2)$-hexagon. We shall begin with a detailed study of the lines of \bm{$S$}.
%\vspace{1.0\baselineskip}
Let $V$ be a 3-dimensional vector space over the finite field $\GF_{4}$, and $h$ a non-degenerate  hermitian form. Then we consider the $U_3(2)$-geometry $(V,h)$ instead of an $S_6(2)$-geometry, as $V$ is considered as a 6-dimensional vector space over the field $\GF_2$ with the non-degenerate alternating form $s$, where $s(x,y):=h(x,y)+h(y,x)$ for all $x,y \in V$ (see Taylor~\cite[Exercise 10.14]{taylor}), and in turn give an alternative and explicit construction of the $G_2(2)$-graph, which is not appearing elsewhere in the above references. The definitions and the notations of \cite[Preliminaries]{inoue} will be freely used. 
\vspace{1.0\baselineskip}

\noindent
Since $H_0 \cap \tilde{H_0}=\emptyset$ and $|H_{0}|=|\tilde{H_0}|=6$, the set $V_1:= \{x \in V \mid h(x,x)=1\}$ is divided into  
\begin{align*}
&W := \{x \in V_1 \mid [x] \in H_0\} \\
&\text{and} \\
&\tilde{W}:=\{x \in V_1 \mid [x] \in \tilde{H}_0\},
\end{align*}
each consisting of 18 vectors.
%$\boldsymbol{D} := \left(V_0, \{[a] \mid a \in V_0\}\right)$, which is a $1$- $(27,3,1)$ design.
\begin{Def}
Define the graph $\Gamma$ whose vertex-set is $V_1$, and two distinct vertices $u$ and $v$ are adjacent whenever $s(u,v)=0$. Then it is well known that $\Gamma$ is an srg(36,15,6,6) with the automorphism group $O_{6}^{-}(2)$:2 (see \cite[Section 3.1.2]{bm}). Let $\Gamma^{\prime}$ be the graph obtained by switching $\Gamma$ with respect to $W$. 
\end{Def}

\begin{Lem}\label{lem:4.2}
$\Gamma^{\prime}$ is $21$-regular
\end{Lem}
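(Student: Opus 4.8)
The plan is to convert the $21$-regularity of $\Gamma'$ first into a counting statement inside $\Gamma$, then into a perpendicularity statement about the non-isotropic points of $(V,h)$. I would start from the effect of switching on degrees: switching with respect to $W$ leaves adjacencies inside $W$ and inside $\tilde W$ unchanged and complements those between $W$ and $\tilde W$. So if $x\in W$ and $a:=|\Gamma(x)\cap W|$, then, using that $\Gamma$ is $15$-regular (whence $|\Gamma(x)\cap\tilde W|=15-a$), the degree of $x$ in $\Gamma'$ is $a+\bigl(|\tilde W|-(15-a)\bigr)=a+(18-15+a)=2a+3$; and if $x\in\tilde W$ and $a:=|\Gamma(x)\cap W|$, the degree of $x$ in $\Gamma'$ is $\bigl(|W|-a\bigr)+(15-a)=(18-a)+(15-a)=33-2a$. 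Hence $\Gamma'$ is $21$-regular if and only if $|\Gamma(x)\cap W|=9$ for every $x\in W$ and $|\Gamma(x)\cap W|=6$ for every $x\in\tilde W$. (One could phrase the same computation through the Seidel matrix via Lemma~\ref{lem:2.2}, but the direct count is shorter.)

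Next I would translate the adjacency condition. For $u,v\in V_1$, $u$ and $v$ are adjacent in $\Gamma$ iff $s(u,v)=0$, i.e.\ iff $h(u,v)+\overline{h(u,v)}=0$, i.e.\ iff $h(u,v)\in\GF_2$. Since $h(x,x)=\overline{h(x,x)}\in\GF_2$ always and the norm map $\GF_4^{\times}\to\GF_2^{\times}$ is trivial, every vector lying on a non-isotropic point of $(V,h)$ belongs to $V_1$; comparing cardinalities ($36=3\cdot 12$, while $|H_0|+|\tilde H_0|=12$), $V_1$ is exactly the union of the $12$ non-isotropic points, three vectors on each, with $W$ carried by the $6$ points of $H_0$ and $\tilde W$ by the $6$ points of $\tilde H_0$. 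Now fix $u\in V_1$: the two vectors on $[u]$ other than $u$ have $h$-value $\omega$ or $\omega^{2}$ (writing $\GF_4=\GF_2[\omega]$), hence are not adjacent to $u$; for a point $[v]\neq[u]$, if $[v]\perp[u]$ then $h(u,\cdot)$ vanishes on $[v]$ so all three vectors of $[v]$ are adjacent to $u$, while if $[v]\not\perp[u]$ then $h(u,\cdot)$ takes each of the three nonzero values of $\GF_4$ once on the vectors of $[v]$, so exactly one of them is adjacent to $u$. Therefore $|\Gamma(u)\cap W|=3p+(5-p)=2p+5$ if $[u]\in H_0$, and $|\Gamma(u)\cap W|=3p+(6-p)=2p+6$ if $[u]\in\tilde H_0$, where $p$ is the number of points of $H_0$ perpendicular to $[u]$. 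So by the first step it remains to show: \emph{every point of $H_0$ is perpendicular to exactly two points of $H_0$, and every point of $\tilde H_0$ is perpendicular to no point of $H_0$.}

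Finally I would supply the geometric input. For a non-isotropic point $[u]$, the subspace $[u]^{\perp}$ is a non-degenerate hermitian line, which over $\GF_4$ carries exactly $3$ isotropic and $2$ non-isotropic points, and those two non-isotropic points are mutually perpendicular (an orthogonal basis of the line); hence the perpendicularity relation on the $12$ non-isotropic points of $(V,h)$ partitions them into $4$ triangles, namely the $4$ orthogonal frames of $(V,h)$. Both assertions above then follow as soon as $H_0$ is a union of two of these four triangles — equivalently, $H_0$ is closed under perpendicularity among non-isotropic points — since then each point of $H_0$ is perpendicular to exactly the other two points of its frame (all in $H_0$), and each point of $\tilde H_0$ has its whole frame in $\tilde H_0$ and so no perpendicular partner in $H_0$. \textbf{The one genuinely content-bearing step is this last identification}: from the explicit description of $H_0$ and $\tilde H_0$ in \cite{inoue} one must check that $H_0$ is precisely the union of two orthogonal frames (and likewise $\tilde H_0$), which I would do by exhibiting two such frames with union $H_0$; everything else is the bookkeeping above. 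As a consistency check, $15$-regularity of $\Gamma$ already forces each non-isotropic point to have exactly two perpendicular non-isotropic partners, so the two displayed assertions are in fact equivalent.
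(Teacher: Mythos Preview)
Your argument is correct. Both you and the paper end up describing $\Gamma'(u)$ via the values of $h(\cdot,u)$---the paper simply writes $\Gamma'(u)\cap W=\{x\in W:h(x,u)=0,1\}$ and $\Gamma'(u)\cap\tilde W=\{x\in\tilde W:h(x,u)=\omega,\bar\omega\}$ and asserts the cardinalities $9$ and $12$ with an ``it is easy to see''---but your route is more informative. You first reduce $21$-regularity to the intersection counts $|\Gamma(u)\cap W|=9$ for $u\in W$ and $|\Gamma(u)\cap W|=6$ for $u\in\tilde W$, then pass to the projective picture and reduce these in turn to the single structural fact that $H_0$ is a union of two of the four orthogonal frames of $(V,h)$. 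This isolates exactly which feature of $H_0$ is doing the work and explains the numbers $9=2\cdot 2+5$ and $6=2\cdot 0+6$ that the paper leaves bare. The residual obligation is the same in both proofs: your frame-closure of $H_0$ and the paper's unproved ``$9$ and $12$'' are the same assertion in different clothing, and either way one must appeal to the explicit description of $H_0$ in \cite{inoue} to close the argument.
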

\begin{proof}
For $u \in W$, it is easy to see that $\Gamma^{\prime}(u)$ is the disjoint union of 
\[\{x \in W \mid h(x,u)=0,1\} \ \ \ \text{and} \ \ \ \{x \in \tilde{W} \mid h(x,u)=\omega,\bar{\omega}\},\]
each consisting of 9 vectors and 12 vectors, respectively, and so $|\Gamma^{\prime}(u)|=9+12=21$, for each $u \in W$. In a similar way to the above statement, each $u \in \tilde{W}$ has valency 21. 
\end{proof}

\begin{Lem}\label{lem:4.3}
$\Gamma^{\prime}$ is an $\text{srg}(36,21,12,12)$.
\end{Lem}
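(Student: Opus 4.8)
The plan is to verify that $\Gamma'$ satisfies the defining matrix identity $(\ast)$ for a strongly regular graph, using that switching is a clean operation on Seidel matrices. By Lemma~\ref{lem:4.2} we already know $\Gamma'$ is $21$-regular on $36$ vertices, so only the parameters $\lambda=\mu=12$ remain. The cleanest route starts from the Seidel matrix: the original $\Gamma$ is an $\mathrm{srg}(36,15,6,6)$, and since $\lambda=\mu$ for $\Gamma$ its Seidel matrix $S$ satisfies a quadratic equation of the form $S^2 = \alpha I + \beta J$ for suitable constants (here $\lambda-\mu=0$ forces the $S$-term to vanish, giving $S^2 = 35I + J$ after substituting $n=36$, $k=15$). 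By Lemma~\ref{lem:2.2}, the Seidel matrix of $\Gamma'$ is $S' = DSD$ with $D$ the $\pm1$ diagonal matrix for $W$; since $D^2=I$, we get $(S')^2 = D S^2 D = 35 I + D J D$. The matrix $DJD$ is no longer $J$, but it is the rank-one matrix $\mathbf{v}\mathbf{v}^\top$ where $\mathbf{v}$ is the $\pm1$ vector recording membership in $W$.

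Next I would translate this back into the adjacency matrix $A'$ of $\Gamma'$ via $S' = J - I - 2A'$. Expanding $(J-I-2A')^2 = 35I + \mathbf{v}\mathbf{v}^\top$ and using $A'J = JA' = 21J$ (Lemma~\ref{lem:4.2}) plus $J^2 = 36J$, one solves for $A'^2$ and collects terms. The point is that $A'^2$ will come out as a linear combination of $I$, $A'$, $J$, and the extra rank-one term $\mathbf{v}\mathbf{v}^\top$; for $\Gamma'$ to be strongly regular with $\lambda=\mu$, the coefficient of $A'$ must vanish and the $\mathbf{v}\mathbf{v}^\top$ contribution must be absorbed into the $I$ and $J$ terms — which happens precisely because $\mathbf{v}\mathbf{v}^\top$ has constant diagonal $1$ and, more subtly, because the counts work out. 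Concretely, one checks that $A'^2 = 21 I + 12 A' + 12(J - I - A')$, i.e.\ identity $(\ast)$ with $(n,k,\lambda,\mu)=(36,21,12,12)$; combined with $(\ast\ast)$ already established, the converse characterization quoted after $(\ast\ast)$ finishes the argument.

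Alternatively — and this is probably the more transparent write-up — one avoids the Seidel algebra and argues combinatorially, in the same local style as the proof of Lemma~\ref{lem:4.2}. For a pair of adjacent (resp.\ non-adjacent) vertices $u,v$ in $\Gamma'$, one counts common neighbours by splitting into the cases $u,v\in W$; $u,v\in\tilde W$; and $u\in W$, $v\in\tilde W$, and in each case re-expressing ``common neighbour in $\Gamma'$'' in terms of the Hermitian form values $h(x,u)$ and $h(x,v)$ exactly as in Lemma~\ref{lem:4.2}. One then invokes the $U_3(2)$-geometry bookkeeping from \cite[Preliminaries]{inoue} — the sizes of the sets $\{x\in V_1 : h(x,u)=a,\ h(x,v)=b\}$ for the various $a,b\in\GF_4$ — to see that every case yields $12$. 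I expect the main obstacle to be precisely this case analysis: keeping the $W$/$\tilde W$ bipartition-with-switching consistent across all three configurations and correctly importing the joint-distribution counts of Hermitian inner-product values from the geometry, since a single miscount in any cell breaks the claim. The Seidel-matrix approach sidesteps the case bookkeeping but pushes the work into verifying the one scalar identity $S^2 = 35I + J$ for the original graph and tracking the rank-one perturbation; whichever is chosen, the other can be relegated to a remark.
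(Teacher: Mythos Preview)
Your overall strategy---pass to the Seidel matrix, use that switching conjugates it by a diagonal $\pm 1$ matrix, then translate back to the adjacency matrix via Lemma~\ref{lem:4.2}---is exactly the paper's. But the key algebraic input you state is wrong, and this derails the rest. You assert that $\lambda=\mu$ makes the $S$-term in $S^2$ vanish, yielding $S^2=35I+J$. In general, expanding $S=J-I-2A$ and using $(\ast)$, $(\ast\ast)$ gives
\[
S^2 \;=\; [\,n-4k+2\lambda+2\mu\,]\,J \;+\; [\,4k-2\lambda-2\mu-1\,]\,I \;-\; 2[\,1+\lambda-\mu\,]\,S,
\]
so the $S$-coefficient is $-2(1+\lambda-\mu)=-2$, not $0$, when $\lambda=\mu$. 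What \emph{does} vanish for $(36,15,6,6)$ is the $J$-coefficient, since here $n=2(2k-\lambda-\mu)$ (the regular-two-graph condition). The correct identity is therefore $S^2=35I-2S$, as one also sees from the Seidel eigenvalues $5,-7$ via $(S-5I)(S+7I)=0$; note that on the all-ones vector $S$ acts as $36-1-2\cdot 15=5$, so $S^2$ acts as $25$, whereas your $35I+J$ would give $71$.

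With the correct equation the rank-one perturbation you worry about never arises: conjugating by $D$ gives $(S')^2=DS^2D=35I-2DSD=35I-2S'$ directly, with no $DJD=\mathbf v\mathbf v^{\top}$ term to absorb. Substituting $S'=J-I-2A'$ and using $A'J=JA'=21J$ from Lemma~\ref{lem:4.2} then yields $A'^2=9I+12J=21I+12A'+12(J-I-A')$, which is $(\ast)$ with the stated parameters. That is the paper's argument; your ``absorb $\mathbf v\mathbf v^{\top}$'' step is both unnecessary and, as written, not actually carried out (and cannot be, since with $S^2=35I+J$ the resulting expression for $A'^2$ is not even integer-valued).
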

\begin{proof}
Let $A$ be the usual adjacency matrix of $\Gamma$, and $S$ its Seidel matrix. Then $S=J-I-2A$, so
\[2A=J-I-B, \tag{$1$}\]
where $J$ is the all-1 matrix and $I$ is the identity matrix. Since $\Gamma$ is an srg(36,15,6,6), $A^2=15I+6A+6(J-I-A)=9I+6J$, so
\[(2A)^2=36I+24J. \tag{$2$}\]
By (1) and (2),
\[ (J-I-S)^2=36I+24J. \tag{$3$}\]
Since $S$ has one entry 0, 15 entries $-1$ and 20 entries +1,
\[ SJ=JS=5J. \tag{$4$}\]
By (3) and (4), we obtain
\[ S^2=35I+2S. \tag{$5$}\]
Put $V_1=\{u_1,u_2,\dots,u_{36}\}$. $\Gamma^{\prime}$ has the Seidel matrix $S^{\prime}:=DSD$, where $D:=(d_{ij})$ is the diagonal matrix with 
\[d_{ii}=
\begin{cases}
-1 & \text{if $u_i \in W$,} \\
+1 & \text{if $u_i \notin W$.}
\end{cases}\] 
Then
\[S^{\prime}\mbox{}^{2}=DSD. \tag{$6$}\]
By (5) and (6),
\[S^{\prime}\mbox{}^{2}=35I-2S^{\prime}. \tag{$7$}\]
If $A^{\prime}$ is the usual adjacency matrix of $\Gamma^{\prime}$, then 
\[S^{\prime}=J-I-2A^{\prime}, \tag{$8$}\]
and Lemma~\ref{lem:4.2} shows that
\[ JA^{\prime}=A^{\prime}J=21J. \tag{$9$}\]
By (7), (8) and (9),
\[ (J-I-2A^{\prime})^{2}=35I-2(J-I-2A^{\prime}),\]
and so we obtain
\[ A^{\prime}\mbox{}^{2}=9I+12J=21I+12A^{\prime}+12(J-I-A^{\prime}).\]
Thus $\Gamma^{\prime}$ is an srg(36,21,12,12).
\end{proof}

\begin{Lem}\label{lem:4.4}
$\Aut\Gamma^{\prime}\simeq G_2(2)$.
\end{Lem}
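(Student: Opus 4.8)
The plan is to identify $\Aut\Gamma'$ with the automorphism group of the two-graph underlying the switching class of $\Gamma$, and then pin down that group as $G_2(2)$ using the known facts about the srg$(36,15,6,6)$ and srg$(36,21,12,12)$. First I would recall (Lemma~\ref{lem:2.3}) that $\Gamma$ and $\Gamma'$ are switching equivalent, hence give rise to the same two-graph $(V_1,\mathcal{B})$; write $G$ for the automorphism group of this two-graph. Since every automorphism of a graph preserves its associated two-graph, there are natural inclusions $\Aut\Gamma \le G$ and $\Aut\Gamma' \le G$. By the Definition, $\Aut\Gamma \simeq O_6^-(2){:}2$, a group of order $2\cdot|O_6^-(2)| = 2 \cdot 2^6 \cdot 3^4 \cdot 5 \cdot 7 \cdot \dots$; more to the point, the regular two-graph on $36$ points obtained from this srg is the classical one whose automorphism group is $\mathrm{Sp}_6(2)$ of order $1451520$, and this is exactly the two-graph $(V_1,\mathcal{B})$ here (this is the standard $36$-point regular two-graph; see Cameron--van Lint~\cite[Chapter 4]{cameron-lint}). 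So the key external input is $G \simeq \mathrm{Sp}_6(2) \simeq S_6(2)$.

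Next I would show $\Aut\Gamma'$ is the stabiliser in $G$ of the partition $\{W,\tilde W\}$, or equivalently of the vertex $u$-local structure that distinguishes $\Gamma'$ from the other graphs in the switching class. Concretely: a permutation $\sigma$ of $V_1$ lies in $\Aut\Gamma'$ iff $\sigma \in G$ and switching $\Gamma$ w.r.t.\ $W$ commutes with $\sigma$ in the appropriate sense, which (using Lemma~\ref{lem:2.2} and the fact that switching classes correspond to cosets) amounts to $\sigma$ sending the switching set $W$ to $W$ or to $\tilde W$ (the two sets in the same switching coset, since switching w.r.t.\ $W$ and w.r.t.\ $\tilde W = \overline W$ are the same operation). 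Thus $\Aut\Gamma' = \mathrm{Stab}_G(\{W,\tilde W\})$. Since $G \simeq S_6(2)$ acts on the $36$ points as on the isotropic... rather, on the non-isotropic points of the $\GF_2$-symplectic space, and $\{W,\tilde W\}$ is a $G$-invariant-up-to-swap partition arising from the $\GF_4$-structure, this stabiliser is an index-$|G : \mathrm{Stab}|$ subgroup. Computing the index: $G$ acts transitively on the $36$ points and on suitable configurations, and the orbit of $W$ (an $18$-set of a specific geometric type, a ``sub-GH$(1,2)$'' configuration) has size $28$ — precisely the degree of the $2$-transitive action of $G_2(2)$ discussed in~\cite{inoue} — while the swap $W \leftrightarrow \tilde W$ is realised inside $G$, so $[G : \mathrm{Stab}_G(\{W,\tilde W\})] = 28$ as well... let me instead argue $[G : \Aut\Gamma'] = 36$: the $36$ cosets correspond to the $36$ sub-GH$(1,2)$'s, giving $|\Aut\Gamma'| = 1451520/36 = 40320$, which is $|G_2(2)|$.

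To finish I would verify $\Aut\Gamma' \simeq G_2(2)$ and not merely a group of the right order. For this I would use the maximality of $G_2(2)$ in $S_6(2)$ (ATLAS~\cite{atlas}): $\mathrm{Stab}_G(\{W,\tilde W\})$ is a proper subgroup of $G \simeq S_6(2)$ of order $40320$, and inspecting the ATLAS list of maximal subgroups of $S_6(2)$ shows $G_2(2)$ is the unique conjugacy class of subgroups of that order; alternatively, identify the action of $\mathrm{Stab}_G(\{W\})$ (index $2$ in $\Aut\Gamma'$) on the $28 = 18+\dots$ points as the known rank-$3$... rank-$2$ action of $G_2(2)'$, matching~\cite{inoue}. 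The main obstacle, and the step deserving the most care, is the precise bookkeeping that identifies $(V_1,\mathcal{B})$ with the classical $36$-point regular two-graph and hence $G$ with $S_6(2)$: one must check that the $3$-subsets of $V_1$ carrying an odd number of $s$-orthogonal pairs are exactly the blocks of that two-graph, which is where the hypothesis that $s(x,y)=h(x,y)+h(y,x)$ is the symplectic form of an $S_6(2)$-geometry (Taylor~\cite[Exercise 10.14]{taylor}) does the real work; everything after that is group theory that the ATLAS settles.
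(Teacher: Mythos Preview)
Your approach is genuinely different from the paper's, which simply quotes Spence's classification~\cite{spence}: since $\Gamma$ and $\Gamma'$ lie in the same switching class (No.~1 in Spence's tables), one reads off from those tables that the srg$(36,21,12,12)$ in that class has automorphism group $G_2(2)$. No group theory beyond the table lookup is used.

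Your route via the two-graph and $S_6(2)$ is a natural idea, but as written it does not go through. The decisive error is numerical: $|G_2(2)|=|U_3(3){:}2|=2\cdot 6048=12096$, not $40320$, so $[S_6(2):G_2(2)]=1451520/12096=120$, not $36$. Hence the orbit of $\Gamma'$ under $G\simeq S_6(2)$ cannot be put in bijection with the $36$ sub-GH$(1,2)$'s in the way you suggest; your sentence ``$|\Aut\Gamma'|=1451520/36=40320$, which is $|G_2(2)|$'' is simply false. There is also a structural gap in the stabiliser step: from $\sigma\in G$ you cannot conclude that $\sigma$ fixes $\Gamma'$ iff $W^{\sigma}\in\{W,\tilde W\}$. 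Writing $\Gamma^{\sigma}$ as $\Gamma$ switched by $Y_{\sigma}$, one finds $(\Gamma')^{\sigma}$ equals $\Gamma$ switched by $Y_{\sigma}\bigtriangleup W^{\sigma}$, so the correct condition is $Y_{\sigma}\bigtriangleup W^{\sigma}\in\{W,\tilde W\}$; this reduces to your condition only for $\sigma\in\Aut\Gamma$, i.e.\ when $Y_{\sigma}\in\{\emptyset,V_1\}$. To salvage the argument you would need to analyse the full $G$-action on the $2^{35}$ graphs in the switching class (or on the relevant $G$-orbit of size $120$) and identify $\Aut\Gamma'$ there --- a substantially longer computation than the paper's one-line citation, and one your sketch does not carry out.
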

\begin{proof}
E.~Spence~\cite{spence} has classified srgs on 36 vertices. According to the table in \cite[pp.474-496]{spence}, since the two graphs $\Gamma$ and $\Gamma^{\prime}$ belong to the same switching class (No.1), it follows that $\Aut\Gamma^{\prime}\simeq G_2(2)$. 
\end{proof}

\begin{Thm}\label{thm:4.5}
The complementary graph $\overline{\Gamma^{\prime}}$ is an srg$(36,14,4,6)$ and its edges consist of three types as follows:
\begin{itemize}
\item[$\bullet$] $\{u,v\}$ where $u,v \in W$ and $h(u,v)=\omega \ \text{or} \ \bar{\omega};$
\item[$\bullet$] $\{u,v\}$ where $u,v \in \tilde{W}$ and $h(u,v)=\omega \ \text{or} \ \bar{\omega};$
\item[$\bullet$] $\{u,v\}$ where $u \in W, v \in \tilde{W}$ and $h(u,v)=1.$
\end{itemize}
\end{Thm}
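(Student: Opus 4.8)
The first assertion costs nothing: by Lemma~\ref{lem:4.3} the graph $\Gamma'$ is an $\mathrm{srg}(36,21,12,12)$, so Lemma~\ref{lem:2.1} shows immediately that $\overline{\Gamma'}$ is an $\mathrm{srg}(36,14,4,6)$; in particular it has exactly $36\cdot14/2=252$ edges. The work is to identify those edges, and the plan is the following.

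First I would unwind the switching. Since $\Gamma'$ is obtained from $\Gamma$ by switching with respect to $W$, a pair $\{u,v\}$ with $u,v$ both in $W$ or both in $\tilde W$ is an edge of $\Gamma'$ exactly when it is an edge of $\Gamma$, whereas a pair with $u\in W$ and $v\in\tilde W$ is an edge of $\Gamma'$ exactly when it is a non-edge of $\Gamma$. Passing to the complement and recalling that $s(u,v)=h(u,v)+\overline{h(u,v)}$ vanishes precisely when $h(u,v)\in\{0,1\}$, one reads off: a pair inside $W$ or inside $\tilde W$ is an edge of $\overline{\Gamma'}$ iff $h(u,v)\in\{\omega,\bar\omega\}$ (the first two bullets), and a pair with $u\in W$, $v\in\tilde W$ is an edge of $\overline{\Gamma'}$ iff $h(u,v)\in\{0,1\}$. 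So the only thing left is to rule out the value $h(u,v)=0$ when $u\in W$, $v\in\tilde W$; equivalently, to show that no point of $H_0$ is perpendicular to a point of $\tilde H_0$.

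For this I would argue by a global edge count resting on two elementary facts about $(V,h)$: (i) for a non-isotropic point $p$, the $2$-dimensional space $p^{\perp}$ carries a non-degenerate hermitian form, hence contains exactly $3$ isotropic points and so exactly $2$ non-isotropic ones, none equal to $p$; thus each of the $12$ non-isotropic points is perpendicular to exactly $2$ of the others, so there are $12$ perpendicular pairs of non-isotropic points in all; (ii) since $h(au,bv)=\bar a b\,h(u,v)$ and $(a,b)\mapsto\bar a b$ is a three-to-one surjection $\GF_4^{\times}\times\GF_4^{\times}\to\GF_4^{\times}$, two distinct non-isotropic points either have all $9$ pairs of representatives perpendicular (when the points are perpendicular) or have exactly $3$ pairs with $h$-value $1$ and $6$ pairs with $h$-value in $\{\omega,\bar\omega\}$ (otherwise), while inside a single point all $3$ pairs of distinct representatives have $h$-value in $\{\omega,\bar\omega\}$ because $h(u,u)=1$. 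Writing $e_0$ for the number of perpendicular point pairs between $H_0$ and $\tilde H_0$, and $e_1,e_2$ for those inside $H_0$ and inside $\tilde H_0$ (so $e_0+e_1+e_2=12$), these facts give $18+6(15-e_1)$ edges of $\overline{\Gamma'}$ inside $W$, $18+6(15-e_2)$ inside $\tilde W$, and $9e_0+3(36-e_0)$ between $W$ and $\tilde W$; summing and using $e_1+e_2=12-e_0$ yields $252+12e_0$ edges in all. Comparing with the $252$ edges found in the first paragraph forces $e_0=0$, which is exactly the missing claim.

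The only real obstacle I anticipate is keeping this count honest: separating the same-point contributions from the distinct-point ones, and nailing down the two geometric facts in (i) and (ii). Should that prove awkward, one can instead invoke the explicit description of $H_0$ and $\tilde H_0$ from~\cite{inoue} and verify directly that perpendicular non-isotropic points always lie in the same part, which yields the third bullet without any counting.
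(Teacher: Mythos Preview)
Your argument is correct. The paper actually states Theorem~\ref{thm:4.5} without proof, so there is nothing to compare against line by line; but the intended argument is transparent from the surrounding text. The parameters $(36,14,4,6)$ are exactly Lemma~\ref{lem:4.3} plus Lemma~\ref{lem:2.1}, and the first two bullets together with the weaker condition $h(u,v)\in\{0,1\}$ for the third bullet are just the complement of the neighbour description written down in the proof of Lemma~\ref{lem:4.2}. You correctly isolate the one nontrivial point: that $h(u,v)=0$ never occurs for $u\in W$, $v\in\tilde W$.

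The paper's implicit route to that point is your fallback option: it announces that the definitions of $H_0$ and $\tilde H_0$ from~\cite{inoue} are used freely, and from that explicit description one sees directly that orthogonal non-isotropic points always lie in the same half. Your primary route---the global edge count showing $252+12e_0=252$, hence $e_0=0$---is a genuinely different and more self-contained argument: it extracts the orthogonality fact from the strong regularity already established, without ever opening~\cite{inoue}. Both approaches are short; yours has the advantage of making the paper internally closed, at the cost of the small bookkeeping in (i) and (ii), which you have handled correctly (in particular, the ``three-to-one'' claim in (ii) and the count of non-isotropic points in $p^{\perp}$ in (i) are right, and the within-point pairs do all have $h$-value in $\{\omega,\bar\omega\}$ since $h(u,au)=a$ with $a\neq 1$).
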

%\begin{proof}
%\end{proof}

%\section*{Acknowledgements} 
%The author is indebted to Professor M.~Kitazume for valuable discussions which led to the main result. 


\begin{thebibliography}{99}

%\bibitem{barwick-ebert}
%S.~G.~Barwick and G.~L.~Ebert, 
%\emph{Unitals in projective planes},
%Springer Monographs in Mathematics,
%Springer, New York, 2008.

%\bibitem{bose}
%R.~C.~Bose, 
%On the application of finite projective geometry for deriving a 
%certain series of balanced Kirkman arrangements,
%In: Calcutta Math. Soc. Golden Jubilee Commemoration Vol. (1958/59), Part II. 
%Calcutta Math. Soc.,
%Calcutta, 1963, 341-354.

%\bibitem{bamberg-durante}
%J.~Bamberg and N.~Durante, 
%Low dimensional models of the finite split Cayley hexagon, 
%\emph{Contemp. Math. vol.579: Theory and applications of finite fields},   
%Amer. Math. Soc., 2012, 1-19.

%\bibitem{bcn}
%A.~E.~Brouwer, A.~M.~Cohen and A.~Neumaier, 
%\emph{Distance-regular graphs},
%Springer-Verlag, Berlin (1989).

%\bibitem{brouwer}
%A.~E.~Brouwer,
%A slowly growing collection of graph descriptions. \\
%\texttt{http://www.win.tue.nl/\~{}aeb/graphs/index.html}
%\bibitem{brouwer28}
%A.~E.~Brouwer, 
%Some unitals on 28 points and their embeddings in projective planes of order 9, 
%Math. Centre Report ZW155181, 
%Amsterdam, 1981.

\bibitem{bm}
A.~E.~Brouwer and H.~van Maldeghem, 
\emph{Strongly regular graphs},
Encyclopedia of Mathematics and its Applications \textbf{182}, 
Cambridge Univ. Press, 2022.


%\bibitem{calderbank-wales}
%A.~R.~Calderbank and D.~B.~Wales,
%A global code invariant under the Higman-Sims group. 
%J. Algebra \textbf{75}, 233-260 (1982).

%\bibitem{cameron-kantor}
%P.~J.~Cameron and W.~M.~Kantor, 
%2-transitive and antiflag transitive collineation groups of finite projective spaces,  
%J. Algebra \textbf{60} (1979), 384-422.

\bibitem{cameron-lint} 
P.~J.~Cameron and J.~H.~van~Lint, 
\emph{Designs, Graphs, Codes and their links},
London Mathematical Society Student Text \textbf{22}, 
Cambridge Univ. Press, 1991.

\bibitem{cohen-tits}
A.~M.~Cohen and J.~Tits, On generalized hexagons and a near octagon whose lines have three points, 
Europ. J. Combin. \textbf{6} (1985), 13-27.

\bibitem{atlas} 
J.~H.~Conway, R.~T.~Curtis, S.~P.~Norton, R.~A.~Parker and R.~A.~Wilson, 
\emph{ATLAS of Finite Groups}, 
Clarendon Press, Oxford, 1985.

%\bibitem{cossidente-king}
%A.~Cossidente and O.~H.~King,
%On the geometry of the exceptional group $G_{2}(q)$, $q$ even, 
%Des. Codes Cryptogr. \textbf{47} (2008), 145-157.

%\bibitem{cossidente-penttila}
%A.~Cossidente and T.~Penttila,
%Note: Segre's hemisystem and McLaughlin's graph. 
%J. Combin. Theory (A) \textbf{115} (2008), 686-692.

%\bibitem{dembowski}
%P.~Dembowski, 
%\emph{Finite Geometries}, 
%Classics in Mathematics, 
%Springer-Verlag, Berlin, 1997.

%\bibitem{dixon-mortimer} 
%J.~D.~Dixon and B.~Mortimer, 
%\emph{Permutation Groups}, 
%Graduate Texts in Mathematics \textbf{163}, 
%Springer-Verlag, New York, 1996.

%\bibitem{goethals-seidel}
%J.~M.~Goethals and J.~J.~Seidel,
%The regular two-graph on $276$ vertices. 
%Discrete Math. \textbf{12} (1975), 143-158.

%\bibitem{haemers} 
%W.~H.~Haemers,  
%A new partial geometry constructed from the Hoffman-Singleton graph.  
%\emph{Finite Geometries and Designs} (\emph{Proc. Second Isle of Thorns Conf.} 1980), 
%\emph{London Math. Soc. Lecture Notes} \textbf{49}, Cambridge Univ. Press, 119-127 (1981).

%\bibitem{hafner}
%P.~R.~Hafner,  
%On the graphs of Hoffman-Singleton and Higman-Sims. 
%Electron J. Combin. \textbf{11} (1) 33, Research Paper 77 (2004).

%\bibitem{ho-si}
%A.~J.~Hoffman and R.~R.~Singleton, 
%On Moore graphs with diameter $2$ and $3.$ I.B.M.J.~Res.~Develop. \textbf{4}, 497-504 (1960).

\bibitem{inoue}
K.~Inoue, 
An alternative construction of the Hermitian unital 2-(28,4,1) design, J. Combin. Designs \textbf{30} (2022), 752-759. 

%\bibitem{kramer-mathon}
%E.~S.~Kramer and R.~A.~Mathon, 
%Proper $S(t,K,v)$'s for $t \ge 3, v \le16, |K|>1$ and their extensions, 
%J. Combin. Designs 
%\textbf{3} (1995), 411-425. 

%\bibitem{krcadinac}
%V.~Kr\v{c}adinac, 
%Steiner 2-designs $S(2,4,28)$ with nontrivial automorphisms,
%Glasnik Matemati\v{c}ki
%\textbf{37} (2002), 259-268.

%\bibitem{CRC-MR} 
%R.~Mathon and A.~Rosa,
%$2$-$(v,k,\lambda)$ designs of small order,
%in {\sl CRC Handbook of Combinatorial
%Designs}, C.J. Colbourn and J.H. Dinitz, (Eds.), CRC Press, Boca Raton, 
%1996, 3-41.

%\bibitem{mcl}
%J.~McLaughlin, 
%A simple group of order $898,128,000.$ 
%\emph{Theory of Finite Groups} (\emph{Symposium, Harvard Univ., Cambridge, Mass.,} 1968), 
%Benjamin, New York, 109-111 (1969).

%\bibitem{ONan}
%M.~O'Nan, 
%Automorphism of unitary block designs, 
%J. Algebra \textbf{20} (1973), 495-511.

%\bibitem{ostergard-pottonen}
%P.~R.~J.~\"{O}sterg\r{a}rd and O.~Pottonen, 
%There exists no Steiner system $S(4,5,17)$, 
%J. Combin. Theory (A) \textbf{115} (2008), 1570-1573.

%\bibitem{payne}
%S.~E.~Payne, 
%A geometric reprensentation of certain generalized hexagons in $PG(3,s)$, 
%J. Combin. Theory \textbf{11} (1971), 181-191.

\bibitem{spence}
E.~Spence, 
Regular two-graphs on 36 vertices, 
Linear Algebra and its Applications 226-228 (1995), 459-497.

\bibitem{taylor}
D.~E.~Taylor, 
\emph{The Geometry of the Classical Groups}, 
Sigma Series in Pure Mathematics \textbf{9}, 
Heldermann Verlag, Berlin, 1992.

%\bibitem{TAYLOR}
%D.~E.~Taylor, 
%Unitary Block Designs, 
%J. Combin. Theory Ser. A \textbf{16} (1974), 51-56.

%\bibitem{taylor3}
%D.~E.~Taylor, 
%Regular $2$-graphs. 
%\emph{Proc. London Math. Soc.} (3) \textbf{35}, 257-274 (1977). 

%\bibitem{thas}
%J.~A.~Thas, 
%Generalized polygons, 
%\emph{Handbook of Incidence Geometry: Buildings and Foundations} (ed. F.~Buekenhout), 
%North-Holland, 1995, 383-431.

%\bibitem{tonchev} 
%V.~D.~Tonchev, 
%A class of Steiner $4$-wise balanced designs derived from the Preparata codes, 
%J. Combin. Designs 
%\textbf{4} (1996), 203-204.

%\bibitem{maldeghem}
%H.~van Maldeghem, 
%\emph{Generalized polygons}, 
%Birkh$\ddot{\mathrm{a}}$user Verlag, Basel, 1998.

%\bibitem{maldeghem2}
%H.~van Maldeghem, 
%An elementary construction of the split Cayley hexagon \bm{$\text{H}(2)$}, 
%AttiSem. Mat. fis. Univ. Modena \textbf{48} (2000), 463-471.

%\bibitem{maldeghem3}
%H.~van Maldeghem, 
%Some constructions of small generalized polygons,
%J. Combin. Theory (A) \textbf{96} (2001), 162-179.

%\bibitem{wieb-john} 
%B.~Wieb and C.~John, 
%\emph{Discovering Mathematics with Magma}, 
%Algorithms and Computation in Mathematics \textbf{19}, 
%Springer-Verlag Berlin Heidelberg (2006). \\
%\texttt{http://magma.maths.usyd.edu.au/magma/dmwm/} 

%\bibitem{wilson}
%R.~A.~Wilson,
%\emph{The Finite Simple Groups}, 
%Graduate Texts in Mathematics \textbf{251},
%Springer, London, 2009.
%\bibitem{yucas} 
%J.~L.~Yucas, 
%Extending $AG(4,2)$ to $S(4,\{5,6\},17),$ 
%J. Combin. Designs 
%\textbf{7} (1999), 113-117.


\end{thebibliography}
\end{document}